\documentclass[11pt, a4paper]{article}

\usepackage{amsmath}
\usepackage{amssymb}
\usepackage{amsthm}
\usepackage{appendix}

\topmargin=-2cm
\oddsidemargin=-0.60cm
\textheight=26cm
\textwidth=17cm
\parskip4pt plus2pt minus2pt
\parindent0mm

\newtheorem{theorem}{Theorem}[section]
\newtheorem{corollary}{Corollary}[section]

\newtheorem{lemma}{Lemma}[section]
\newtheorem{remark}{Remark}[section]

\newtheorem*{thma}{Theorem A}
\newtheorem*{propa}{Proposition A}

\begin{document}
\title{Characterizing Riesz Bases via Biorthogonal Riesz-Fischer sequences}
\author{Elias Zikkos\\
Khalifa University, Abu Dhabi, United Arab Emirates\\
email address:  elias.zikkos@ku.ac.ae and eliaszikkos@yahoo.com}

\maketitle

\begin{abstract}
In this note we prove that if two Riesz-Fischer sequences in a separable Hilbert space $H$
are biorthogonal and one of them is complete in $H$, then both sequences are Riesz bases for $H$.
This complements a recent result by D. T. Stoeva where the same conclusion holds if one replaces
the phrase ``Riesz-Fischer sequences'' by ``Bessel sequences''.
\end{abstract}

Keywords: Riesz-Fischer sequences, Bessel sequences, Riesz sequences, Riesz bases, Biorthogonal
sequences, Completeness.

AMS 2010 Mathematics Subject Classification.  42C15, 42C99.

\section{Introduction}

Let $H$ be a separable Hilbert space
endowed with an inner product $\langle \,\cdot \, \rangle$ and a norm $||\, \cdot\, ||$.
Let $\{f_n\}_{n=1}^{\infty}$ be a sequence of vectors in $H$. We say that
$\{f_n\}_{n=1}^{\infty}$  is a $\bf{Riesz\,\, basis}$ for $H$ if $f_n=V(e_n)$ where
$\{e_n\}_{n=1}^{\infty}$ is an orthonormal basis for $H$ and $V$ is a bounded
bijective operator from $H$ onto $H$.

One of the many equivalences of Riesz bases \cite[Theorem 1.1]{Stoeva}
states that
\begin{itemize}
\item
A sequence is a Riesz basis for $H$, if and only if it is a complete Bessel sequence 
having a complete biorthogonal Bessel sequence in $H$.
\end{itemize}
Recall that $\{f_n\}_{n=1}^{\infty}$ is a $\bf Bessel$ sequence if there is a positive constant $B$ so that
\[
\sum_{n=1}^{\infty}|\langle f, f_n\rangle|^2< B\cdot ||f||\qquad \forall\,\, f\in H,
\]
and $\{f_n\}_{n=1}^{\infty}$ is $\bf complete$ if its closed span in $H$ is equal to $H$.
$\bf Biorthogonality$ between two sequences $\{f_n\}_{n=1}^{\infty}$ and $\{g_n\}_{n=1}^{\infty}$ means that
\[
\langle f_n, g_m\rangle =\begin{cases} 1, & m=n, \\  0, & m\not=n.\end{cases}
\]

Recently Stoeva \cite{Stoeva} improved the above equivalence by 
assuming completeness on just one sequence.

\begin{thma}\cite[Theorem 2.5]{Stoeva}
Let two sequences in $H$ be biorthogonal.
If both of them are Bessel sequences and one of them is complete in $H$, then
they are Riesz bases for $H$.
\end{thma}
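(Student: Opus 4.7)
The plan is to reduce Theorem A to the equivalence recalled in the introduction (Theorem 1.1 of \cite{Stoeva}): since we already have two biorthogonal Bessel sequences, it is enough to prove that the second sequence is automatically complete as soon as the first one is. Denote the two sequences by $\{f_n\}_{n=1}^{\infty}$ and $\{g_n\}_{n=1}^{\infty}$, with $\{f_n\}$ assumed complete. Let $T,S:\ell^2\to H$ be the synthesis operators $T(\{c_n\})=\sum_n c_n f_n$ and $S(\{c_n\})=\sum_n c_n g_n$, both bounded by the Bessel hypothesis, with adjoints given by the analysis operators $T^*f=\{\langle f,f_n\rangle\}_n$ and $S^*f=\{\langle f,g_n\rangle\}_n$.

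The key step is to observe that biorthogonality forces $S^*f_n=e_n$ and $T^*g_n=e_n$, where $\{e_n\}$ is the canonical orthonormal basis of $\ell^2$. Consequently $TS^*f_n=Te_n=f_n$ for every $n$. Since $TS^*$ is a bounded operator on $H$ that agrees with the identity on every $f_n$, it agrees with the identity on the closed linear span of $\{f_n\}$; by the completeness hypothesis that span is all of $H$, so $TS^*=I_H$.

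From $TS^*=I_H$ it follows that $S^*$ is injective: if $S^*f=0$, then $f=TS^*f=0$. Injectivity of the analysis operator $S^*$ is precisely the statement that the only vector orthogonal to every $g_n$ is $0$, i.e.\ $\{g_n\}$ is complete in $H$. At that point $\{f_n\}$ and $\{g_n\}$ are biorthogonal, both Bessel, and both complete, so the equivalence of Theorem 1.1 in \cite{Stoeva} delivers that each of them is a Riesz basis for $H$.

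There is no serious obstacle beyond the short density argument used to promote $TS^*f_n=f_n$ (on the spanning set) to $TS^*=I_H$. The central observation is that biorthogonality alone gives $S^*f_n=e_n$, so that $TS^*$ fixes the $f_n$; this is exactly what converts \emph{completeness on one side} into \emph{completeness on both sides}, after which the cited two-sided characterization concludes the proof.
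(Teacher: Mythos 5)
The paper does not actually prove Theorem A: it is quoted verbatim from Stoeva and used as a known result, so there is no in-paper proof to match yours against. Judged on its own, your argument is correct. The synthesis operators $T$ and $S$ are bounded precisely because both sequences are Bessel, the adjoint computation $S^*f=\{\langle f,g_n\rangle\}_n$ is right, biorthogonality does give $S^*f_n=e_n$ and hence $TS^*f_n=f_n$, and the density-plus-continuity step legitimately upgrades this to $TS^*=I_H$, from which injectivity of $S^*$ (equivalently, completeness of $\{g_n\}$) follows; the two-sided characterization recalled in the introduction then finishes the job. It is worth noting how this differs in flavor from the route the paper takes for its own analogous statement (Theorem 1.1, the Riesz--Fischer version): there the author works geometrically inside the closed span $U$ of the Riesz sequence, compares the given biorthogonal sequence with the dual Riesz basis $\{h_n\}$ of $U$, and uses the decomposition $H=U\oplus U^{\perp}$ together with uniqueness of biorthogonal sequences for exact systems to force $f_n=h_n$. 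Your operator-theoretic argument is the more standard frame-theory proof and transfers completeness from one side to the other in one line ($S^*f=0\Rightarrow f=TS^*f=0$), at the cost of invoking the two-sided equivalence as a black box; the paper's geometric argument for the Riesz--Fischer case instead leans on the dual-basis structure and avoids the synthesis-operator machinery. The only points you leave implicit, both standard, are that the Bessel bound makes $\sum_n c_n f_n$ converge for every $\{c_n\}\in\ell^2$ (so $T$ and $S$ are well defined) and that the adjoint of synthesis is analysis.
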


$\bf{Our\,\, goal}$ in this note is to complement Theorem $\bf A$ by replacing
the phrase ``Bessel sequences'' by ``Riesz-Fischer sequences''.
Following Young \cite[Chapter 4, Section 2]{Young}, $\{f_n\}_{n=1}^{\infty}$ is a
$\bf Riesz-Fischer$ sequence in $H$ if the moment problem
\[
\langle f, f_n\rangle =c_n
\]
has at least one solution $f\in H$ for every sequence $\{c_n\}_{n=1}^{\infty}$ in the space $l^2(\mathbb{N})$.
We prove the following.

\begin{theorem}\label{RF}
Let two sequences $\{f_n\}_{n=1}^{\infty}$ and $\{g_n\}_{n=1}^{\infty}$ in $H$ be biorthogonal.
If both of them are Riesz-Fischer sequences and one of them is complete in $H$, then
they are Riesz bases for $H$.
\end{theorem}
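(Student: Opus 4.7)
The strategy is to reduce to Theorem~A by promoting both Riesz-Fischer sequences to Bessel sequences, after which Stoeva's result yields the Riesz basis conclusion immediately.

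The central tool is a closed graph argument that converts ``complete plus Riesz-Fischer'' into ``biorthogonal is Bessel''. Assume without loss of generality that $\{f_n\}_{n=1}^{\infty}$ is the complete sequence. For every $(c_n)\in l^2(\mathbb{N})$ the Riesz-Fischer property supplies some $f\in H$ with $\langle f,f_n\rangle=c_n$, and completeness of $\{f_n\}$ forces $f$ to be unique, so the rule $U\colon l^2\to H$, $U((c_n)):=f$, is a well-defined linear map. If $c^{(k)}\to c$ in $l^2$ and $U(c^{(k)})\to h$ in $H$, then continuity of each inner product $f\mapsto\langle f,f_n\rangle$ gives $\langle h,f_n\rangle=\lim_k c_n^{(k)}=c_n$, which forces $h=U(c)$; hence $U$ is closed and the closed graph theorem makes it bounded. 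Biorthogonality identifies $U(e_n)=g_n$, and the inequality
\[
\Bigl\|\sum c_n g_n\Bigr\|^2=\|U((c_n))\|^2\le\|U\|^2\sum|c_n|^2
\]
for finite sequences shows that $\{g_n\}$ is Bessel.

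Combining this newly obtained Bessel property of $\{g_n\}$ with its hypothesized Riesz-Fischer property produces a Riesz basis for the closed linear span $W:=\overline{\mathrm{span}}\{g_n\}$ by the standard equivalence. The remaining task is to establish that $W=H$, i.e., $\{g_n\}$ is itself complete in $H$; once this is secured, the same closed graph argument with the roles of $\{f_n\}$ and $\{g_n\}$ swapped upgrades $\{f_n\}$ to a Bessel sequence, and Theorem~A applied to the two biorthogonal Bessel sequences (one of which is complete by hypothesis) finishes the proof.

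This completeness step is where I expect the principal obstacle to lie. The plan is a contradiction argument: assume some unit vector $\varphi\in W^{\perp}$ exists, invoke completeness of $\{f_n\}$ to write $\varphi=\lim_k\sigma_k$ with finite sums $\sigma_k=\sum_n a_n^{(k)}f_n$, and apply the now bounded analysis operator $T_g$ (available because $\{g_n\}$ is Bessel) to deduce $(a_n^{(k)})=T_g\sigma_k\to T_g\varphi=0$ in $l^2$. The Riesz-Fischer lower bound for $\{f_n\}$ only gives $A\sum|a_n^{(k)}|^2\le\|\sigma_k\|^2$, which is consistent with $\|\sigma_k\|\to 1$ and $\|(a_n^{(k)})\|_{l^2}\to 0$, so the contradiction must exploit the Riesz-Fischer property of \emph{both} sequences together with the Riesz basis structure of $\{g_n\}$ on $W$, and it is here that the argument will require the most delicate accounting.
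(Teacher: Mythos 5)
Your reduction strategy is sound as far as it goes: the closed graph argument correctly shows that the (unique, by exactness of the complete minimal sequence $\{f_n\}$) biorthogonal sequence $\{g_n\}$ is Bessel, hence a Riesz sequence, hence a Riesz basis for $W=\overline{\mathrm{span}}\{g_n\}$. This is exactly the paper's route up to that point: the paper gets the Bessel property of $\{g_n\}$ by citing Proposition A(b) (every Riesz--Fischer sequence has a biorthogonal Bessel sequence) together with uniqueness of biorthogonal sequences for exact systems, and your closed graph argument is in effect a self-contained proof of that cited result.

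The step you leave open --- proving $W=H$ --- is a genuine gap, and it cannot be closed, because the statement is false. Take $H=\ell^2$, $g_n=e_{n+1}$ and $f_n=e_1+e_{n+1}$. These are biorthogonal; both satisfy the lower Riesz--Fischer bound with $A=1$ (indeed $\|\sum\beta_nf_n\|^2=|\sum\beta_n|^2+\sum|\beta_n|^2\ge\sum|\beta_n|^2$); and $\{f_n\}$ is complete, since $\langle v,f_n\rangle=0$ for all $n$ gives $\langle v,e_{n+1}\rangle=-\langle v,e_1\rangle$ for all $n$, forcing $\langle v,e_1\rangle=0$ because $\langle v,e_{n+1}\rangle\to0$, hence $v=0$. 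Yet $\{g_n\}$ is not complete and $\{f_n\}$ is not Bessel, so neither is a Riesz basis. Your observation that the attempted contradiction is ``consistent with $\|\sigma_k\|\to 1$ and $\|(a_n^{(k)})\|_{\ell^2}\to 0$'' is realized here with $\varphi=e_1$ and $\sigma_k=\frac1k\sum_{n=1}^kf_n=e_1+\frac1k\sum_{n=1}^ke_{n+1}$. The paper supplies this missing step as Lemma 3.1, whose proof argues that if $f_n\ne h_n$ (with $\{h_n\}$ the dual Riesz basis of $\{g_n\}$ in $W$) then $f_n\notin W$ and ``hence $f_n$ belongs to $W^{\perp}$''; that inference is a non sequitur (a vector outside a closed subspace need not be orthogonal to it), and the same example refutes the lemma: $h_n=e_{n+1}$, $f_n-h_n=e_1\in W^{\perp}$, and no contradiction arises. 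So the obstruction you sensed is real, and the fault lies with the theorem rather than with your proof.
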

The proof is given in Section 3, once we present below some properties of Bessel and Riesz-Fischer sequences
and a nice result connecting the two notions (Proposition $\bf A$).

\section{Riesz-Fischer sequences and Bessel sequences}

In \cite[Chapter 4, Section 2, Theorem 3]{Young} we find the following
two theorems  which provide a necessary and sufficient condition so that
a sequence in $H$ is either a Riesz-Fischer sequence or a Bessel sequence.
Both results are attributed to $\bf{Nina\,\, Bari}$.

\begin{itemize}
\item
$\{f_n\}_{n=1}^{\infty}$ is a Riesz-Fischer sequence in $H$ if and only if
there exists a positive number $A$ so that for any finite scalar sequence
$\{\beta_n\}$ we have
\begin{equation}\label{rieszfischer}
A\sum |\beta_n|^2  \le \left|\left|\sum \beta_n f_n \right|\right|^2.
\end{equation}

\item
$\{f_n\}_{n=1}^{\infty}$ is a Bessel sequence in $H$ if and only if
there exists a positive number $B$ so that for any finite scalar sequence
$\{\beta_n\}$ we have
\begin{equation}\label{bessel}
\left|\left|\sum \beta_n f_n \right|\right|^2 \le B\sum |\beta_n|^2.
\end{equation}
\end{itemize}

If a sequence is both a Bessel sequence and a Riesz-Fischer sequence, then it is called a $\bf Riesz$ sequence
(see Seip \cite[Lemma 3.2]{Seip}). That is, $\{f_n\}_{n=1}^{\infty}$ is
a Riesz sequence if there are some positive constants $A$ and $B$, $A\le B$,
so that for any finite scalar sequence $\{\beta_n\}$ we have
\[
A\sum |\beta_n|^2  \le \bigg|\bigg|\sum \beta_n f_n \bigg|\bigg|^2 \le B\sum |\beta_n|^2.
\]

\begin{remark}
A Riesz sequence is also a Riesz basis for the closure of its linear span in $H$ (see \cite[p. 68]{Christensen}).
Therefore, a complete Riesz sequence in $H$ is a Riesz basis for $H$.
\end{remark}

Now, it easily follows from \eqref{rieszfischer} that a Riesz-Fischer sequence is also a $\bf minimal$ sequence, that is
each $f_n$ does not belong to the closed span of $\{f_k\}_{k\not=n}$ in $H$.

\begin{remark}
It is well known that a sequence is minimal if and only if it has a biorthogonal sequence.
A complete and minimal sequence is called $\bf exact$ and it has a unique biorthogonal sequence.
\end{remark}

Clearly now a Riesz-Fischer sequence has at least one biorthogonal sequence.
As stated in Casazza et al. \cite{Casazza}, one of them is a Bessel sequence (see Green \cite[Proposition 1.2.4]{Green} for a proof).

\begin{propa} \cite[Proposition 2.3, (ii)]{Casazza} 

The Riesz-Fischer sequences in $H$ are precisely the families for which a biorthogonal Bessel sequence exists.
In other words

(Part a) Suppose that a Bessel sequence $\{f_n\}$ is biorthogonal to a sequence $\{g_n\}$ in $H$.
Then $\{g_n\}$ is a Riesz-Fischer sequence.

(Part b) If $\{f_n\}$ is a Riesz-Fischer sequence, then it has a biorthogonal Bessel sequence.

\end{propa}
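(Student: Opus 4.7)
The plan is to prove each part via the Bari characterizations \eqref{rieszfischer} and \eqref{bessel}.

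For Part a, I start with an arbitrary finite scalar sequence $\{\beta_n\}$ and let $g = \sum \beta_n g_n$. Biorthogonality gives $\langle g, f_m\rangle = \beta_m$, and the Bessel property of $\{f_n\}$ yields $\sum_m |\langle g, f_m\rangle|^2 \le B \|g\|^2$. Combining these shows $\sum |\beta_m|^2 \le B \|\sum \beta_n g_n\|^2$, which is precisely the Riesz-Fischer estimate \eqref{rieszfischer} for $\{g_n\}$ with constant $1/B$.

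For Part b, the idea is to bundle the coordinate functionals of $\{f_n\}$ into a single operator into $\ell^2$ and then take its adjoint. Let $V$ denote the (not necessarily closed) linear span of $\{f_n\}$; since $\{f_n\}$ is minimal (a direct consequence of \eqref{rieszfischer}), the coefficients in any expansion $v = \sum \beta_k f_k \in V$ are unique, so the map
\[
S_0 : V \to \ell^2, \qquad S_0\Big(\sum \beta_k f_k\Big) = \{\beta_n\}_{n=1}^{\infty}
\]
is well defined, and \eqref{rieszfischer} gives $\|S_0 v\|_{\ell^2} \le \frac{1}{\sqrt{A}} \|v\|_H$. Extend $S_0$ by continuity to $\bar V$ and then to all of $H$ by setting it to zero on $\bar V^{\perp}$, obtaining a bounded operator $S : H \to \ell^2$ with $\|S\| \le 1/\sqrt{A}$. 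Define $g_n := S^* e_n$, where $\{e_n\}$ is the canonical orthonormal basis of $\ell^2$. The identity $S f_m = e_m$ then gives $\langle f_m, g_n\rangle = \langle S f_m, e_n\rangle_{\ell^2} = \delta_{mn}$, proving biorthogonality. Finally, for any finite scalar sequence $\{\beta_n\}$,
\[
\Big\|\sum \beta_n g_n\Big\|_H = \Big\|S^*\Big(\sum \beta_n e_n\Big)\Big\|_H \le \|S^*\|\, \|\{\beta_n\}\|_{\ell^2},
\]
so $\{g_n\}$ satisfies \eqref{bessel} with constant $\|S\|^2 \le 1/A$ and is therefore Bessel.

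The main obstacle I anticipate is the construction in Part b. A naive approach is to extend each coordinate functional $L_n(\sum \beta_k f_k) = \beta_n$ separately via Hahn-Banach, producing individual vectors $g_n \in H$ of bounded norm, but this gives no control over joint sums $\|\sum \beta_n g_n\|$ and so fails to yield the Bessel property. The key move is to package all coordinate functionals into one vector-valued operator into $\ell^2$: the Riesz-Fischer inequality is exactly what makes that operator bounded, and passing to the adjoint then converts this uniform bound into the desired Bessel estimate in one stroke.
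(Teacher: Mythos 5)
Your proof is correct. The paper itself states Proposition A without proof, deferring to Casazza et al.\ and to Green's dissertation, and your argument via Bari's characterizations \eqref{rieszfischer} and \eqref{bessel} is essentially the standard one found there: Part a is the direct duality computation $\beta_m=\langle \sum\beta_n g_n, f_m\rangle$ combined with the Bessel bound, and Part b packages the coordinate functionals into the bounded coefficient operator $S:H\to\ell^2$ (well defined on the span by the lower bound \eqref{rieszfischer}, extended by continuity and by zero on the orthogonal complement) and takes $g_n=S^*e_n$, with the single estimate $\big\|\sum\beta_n g_n\big\|^2\le \|S\|^2\sum|\beta_n|^2$ delivering the Bessel property; all steps check out.
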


\section{Proof of Theorem $\ref{RF}$}

First we prove the following result.
\begin{lemma}\label{R}
Let two sequences $\{f_n\}_{n=1}^{\infty}$ and $\{g_n\}_{n=1}^{\infty}$ in $H$ be biorthogonal.
If $\{g_n\}_{n=1}^{\infty}$ is a Riesz sequence and $\{f_n\}_{n=1}^{\infty}$ is complete in $H$, then
$\{g_n\}_{n=1}^{\infty}$ is also complete in $H$, hence both sequences are Riesz bases for $\cal H$.
\end{lemma}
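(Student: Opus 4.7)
My plan is to reduce the lemma to Theorem A by showing that $\{f_n\}$ is in fact also a Bessel sequence. Once this is known, $\{f_n\}$ and $\{g_n\}$ form a biorthogonal Bessel pair with $\{f_n\}$ complete, and Theorem A delivers the conclusion that both are Riesz bases for $H$.

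To set up, recall that $\{g_n\}$ is a Riesz sequence, hence Bessel. Proposition A (Part a) applied to the biorthogonal pair therefore tells us that $\{f_n\}$ is a Riesz-Fischer sequence; in particular, the lower inequality \eqref{rieszfischer} holds for $\{f_n\}$, making it minimal, and together with completeness, exact. Consequently $\{g_n\}$ is the unique biorthogonal sequence to $\{f_n\}$ in $H$. Set $M := \overline{\mathrm{span}}\{g_n\}$ and let $\{\tilde g_n\} \subset M$ be the dual Riesz basis of $\{g_n\}$ inside $M$. If $P : H \to M$ is the orthogonal projection, biorthogonality forces $Pf_n = \tilde g_n$, and we obtain the decomposition $f_n = \tilde g_n + w_n$ with $w_n := (I-P)f_n \in M^\perp$.

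I would next argue by contradiction that $M = H$. Suppose $M \neq H$ and pick $h \in M^\perp$ with $h \neq 0$. By completeness of $\{f_n\}$, choose finite combinations $y_j = \sum_k \beta_k^{(j)} f_k \to h$. Applying $P$ gives $\sum_k \beta_k^{(j)} \tilde g_k = Py_j \to Ph = 0$; since $\{\tilde g_n\}$ is a Riesz basis for $M$, the inequality \eqref{rieszfischer} applied to $\{\tilde g_n\}$ forces $\sum_k |\beta_k^{(j)}|^2 \to 0$. If $\{f_n\}$ satisfies the inequality \eqref{bessel} with some constant $B$, then $\|y_j\|^2 \le B \sum_k |\beta_k^{(j)}|^2 \to 0$, which contradicts $\|y_j\| \to \|h\| > 0$; this forces $M = H$, so $\{g_n\}$ is complete and both sequences are Riesz bases.

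The main obstacle is therefore to establish the upper inequality \eqref{bessel} for $\{f_n\}$ from the hypotheses. My approach would be to exploit the orthogonal decomposition $f_n = \tilde g_n + w_n$: since $\{\tilde g_n\}$ is Bessel (being a Riesz basis for $M$), an upper bound for $\|\sum_k \beta_k f_k\|^2$ reduces to a Bessel-type bound for $\|\sum_k \beta_k w_k\|^2$ inside $M^\perp$. I would try to extract that bound from the Riesz-Fischer inequality for $\{f_n\}$ (which gives the lower control $\|\sum_k \beta_k w_k\|^2 \ge A\|\beta\|_{\ell^2}^2 - \|\sum_k \beta_k \tilde g_k\|^2$) combined with the spanning property of $\{w_n\}$ in $M^\perp$ that follows from the completeness of $\{f_n\}$ and the projection argument above, possibly via a closed-graph argument applied to the synthesis map $c \mapsto \sum_k c_k f_k$. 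This is where I expect the real technical content of the proof to lie.
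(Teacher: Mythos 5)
Your reduction is set up correctly as far as it goes: since $\{g_n\}$ is a Riesz sequence it is a Riesz basis for $M=\overline{\mathrm{span}}\{g_n\}$ with a dual basis $\{\tilde g_n\}\subset M$, biorthogonality does force $Pf_n=\tilde g_n$, and your contradiction argument would indeed finish the proof \emph{if} $\{f_n\}$ were a Bessel sequence. But the step you defer to the end --- extracting the upper bound \eqref{bessel} for $\{f_n\}$ from the hypotheses --- is not a technical detail that remains to be worked out; it is impossible. Take $H=\ell^2$ with orthonormal basis $\{e_k\}_{k\ge 0}$ and set $f_n=e_0+e_n$ and $g_n=e_n$ for $n\ge 1$. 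These are biorthogonal, $\{g_n\}$ is orthonormal (hence a Riesz sequence), and $\{f_n\}$ is complete because $\frac1N\sum_{n=1}^N f_n=e_0+\frac1N\sum_{n=1}^N e_n\to e_0$, after which every $e_n=f_n-e_0$ lies in the closed span. Yet $\{f_n\}$ is not Bessel: with $\beta_n=N^{-1/2}$ for $1\le n\le N$ one gets $\left\|\sum\beta_nf_n\right\|^2=N+1$ while $\sum|\beta_n|^2=1$. And $\{g_n\}$ is not complete, since $e_0\perp g_n$ for all $n$. In your notation the obstruction is exactly the component $w_n=(I-P)f_n$: here $w_n=e_0$ for every $n$, and nothing in the hypotheses controls the $w_n$ from above. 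The Riesz--Fischer inequality \eqref{rieszfischer} for $\{f_n\}$ only bounds $\left\|\sum\beta_nw_n\right\|$ from \emph{below}, which is the wrong direction, and the closed-graph idea fails because the synthesis map need not be closable.

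Note that this example satisfies every hypothesis of Lemma \ref{R}, and since $\left\|\sum\beta_nf_n\right\|^2=\left|\sum\beta_n\right|^2+\sum|\beta_n|^2\ge\sum|\beta_n|^2$ the family $\{f_n\}$ is also a Riesz--Fischer sequence, so every hypothesis of Theorem \ref{RF} holds as well; the conclusion nevertheless fails. The statement you were asked to prove is therefore false, and no strategy can succeed. For what it is worth, the paper's own argument breaks at precisely the spot you isolated: from $f_n-h_n\in U^\perp$ and $f_n\notin U$ it concludes ``hence $f_n$ belongs to $U^\perp$,'' which is a non sequitur --- a vector can lie in neither $U$ nor $U^\perp$, and in the example above $f_n=e_0+e_n$ does exactly that. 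Your instinct that the Bessel property of $\{f_n\}$ is the real content of the lemma was sound; the honest conclusion is that it cannot be established from the stated hypotheses.
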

\begin{proof}
Denote by $U$ the closed span of $\{g_n\}_{n=1}^{\infty}$ in $H$.
Since $\{g_n\}_{n=1}^{\infty}$ is a Riesz sequence then it is a Riesz basis for $U$, 
therefore it has a dual biorthogonal Riesz basis for $U$,
call it $\{h_n\}_{n=1}^{\infty}$. Thus
\[
U=\overline{\text{span}}\{h_n\}_{n=1}^{\infty}=\overline{\text{span}}\{g_n\}_{n=1}^{\infty}.
\]
Let $U^{\perp}$ be the orthogonal complement of $U$ in $H$, that is
\[
U^{\perp}=\{f\in H:\,\, \langle f, g \rangle = 0 \quad for\,\, all\,\, g\in U\}.
\]
Hence if $f\in U$, then $\langle f, g_n\rangle = 0$ and $\langle f, h_n\rangle = 0$ for all $n\in\mathbb{N}$.

Now, due to biorthogonality, for fixed $n\in\mathbb{N}$ we have $\langle f_n, g_n\rangle = 1$
and $\langle h_n, g_n\rangle = 1$. We also have $\langle f_n, g_m\rangle = 0$
and $\langle h_n, g_m\rangle = 0$ for all $m\not= n$. Therefore, $\langle (f_n - h_n), g_k\rangle = 0$ for all $k\in\mathbb{N}$,
thus, $(f_n - h_n)$ belongs to $U^{\perp}$.

Suppose that $(f_n - h_n)\not= 0$. Then $(f_n - h_n)$ does not belongs to $U=\overline{\text{span}}\{h_n\}_{n=1}^{\infty}$.
It readily follows that $f_n$ does not belong to $U$ either (if $f_n\in U$ then $(f_n-h_n)\in U$ as well).
Hence $f_n$ belongs to $U^{\perp}$, so
$\langle f_n, g_k\rangle = 0$ for all $k\in\mathbb{N}$, a contradiction since $\langle f_n, g_n\rangle = 1$.
We have now concluded that $f_n=h_n$ and clearly this holds for all $n\in\mathbb{N}$. Therefore,
$\{h_n\}_{n=1}^{\infty}=\{f_n\}_{n=1}^{\infty}$ is a complete Riesz sequence in $H$, hence a Riesz basis for $H$.
The same of course holds for its biorthogonal sequence $\{g_n\}_{n=1}^{\infty}$.
\end{proof}

\begin{corollary}
Suppose that an exponential system $E=\{e^{i\lambda_n t}\}$, with real $\lambda_n$,
is a Riesz-Fischer sequence in $L^2 (-a, a)$, and $E$ has a complete biorthogonal sequence.
Then $E$ is a Riesz basis for $L^2 (-a, a)$.
\end{corollary}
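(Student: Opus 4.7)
The plan is to deduce the corollary from Lemma~\ref{R}: once the exponential system $E$ is upgraded from Riesz-Fischer to a full Riesz sequence, Lemma~\ref{R} applies directly, because the complete biorthogonal $\{g_n\}$ is already provided by hypothesis. Thus the entire task reduces to verifying the Bessel estimate \eqref{bessel} for $E$.

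First I would use \eqref{rieszfischer} to force a uniform separation of the frequencies $\{\lambda_n\}$. Testing the Riesz-Fischer inequality against the two-term finite sum with $\beta_n=1$, $\beta_m=-1$ and using the explicit calculation
\[
\|e^{i\lambda_n t}-e^{i\lambda_m t}\|^2 = 4a - \frac{4\sin\bigl(a(\lambda_n-\lambda_m)\bigr)}{\lambda_n-\lambda_m}
\]
in $L^2(-a,a)$, one obtains
\[
\frac{\sin\bigl(a(\lambda_n-\lambda_m)\bigr)}{a(\lambda_n-\lambda_m)} \leq 1-\frac{A}{2a}.
\]
Since the left-hand side tends to $1$ as $\lambda_n-\lambda_m\to 0$, there must exist $\delta>0$ with $|\lambda_n-\lambda_m|\geq\delta$ for all $n\neq m$; in other words, $\{\lambda_n\}$ is uniformly discrete.

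Next I would invoke the classical Plancherel--Polya inequality: for any uniformly discrete real sequence $\{\lambda_n\}$, the exponential system $\{e^{i\lambda_n t}\}$ is automatically a Bessel sequence in $L^2(-a,a)$ (see Young \cite{Young}). Combined with the standing hypothesis, $E$ now satisfies both \eqref{rieszfischer} and \eqref{bessel}, so $E$ is a Riesz sequence. Since its biorthogonal $\{g_n\}$ is complete, Lemma~\ref{R} applied to the pair (with $E$ playing the role of the Riesz sequence and $\{g_n\}$ the role of the complete sequence) concludes that $E$ is a Riesz basis for $L^2(-a,a)$.

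The hard part is the implication \emph{uniform discreteness} $\Rightarrow$ \emph{Bessel}, which rests on the nontrivial Plancherel--Polya / Ingham-type inequality; the remainder is bookkeeping plus a direct invocation of Lemma~\ref{R}. The argument depends essentially on both structural features of the setting: the realness of the $\lambda_n$ (so that each $\|e^{i\lambda_n t}\|$ is bounded) and the exponential structure itself (so that separation of frequencies has genuine analytic content).
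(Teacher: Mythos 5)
Your proof is correct and follows essentially the same route as the paper: both arguments upgrade $E$ from a Riesz-Fischer sequence to a Riesz sequence by establishing the Bessel bound \eqref{bessel} and then conclude by applying Lemma~\ref{R} to $E$ and its complete biorthogonal sequence. The only difference is that the paper obtains the Bessel property by citing Lindner \cite{Lindner}, whereas you re-derive it explicitly (two-term test coefficients in \eqref{rieszfischer} force uniform separation of the $\lambda_n$, and Plancherel--Polya then gives \eqref{bessel}); your computation is correct and is in substance precisely the content of the cited result.
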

\begin{proof}
It follows from Lindner \cite{Lindner} that $E$ is a Bessel sequence in $L^2 (-a, a)$ as well.  
Hence $E$ is a Riesz sequence in $L^2 (-a, a)$ and from Lemma $\ref{R}$ we obtain the result. 
\end{proof}

Consider now the assumptions of Theorem $\ref{RF}$ and without loss of generality,
suppose that $\{f_n\}_{n=1}^{\infty}$ is complete in $H$, hence it is exact since it is also a minimal sequence.
Thus it has a unique biorthogonal sequence and clearly this is $\{g_n\}_{n=1}^{\infty}$.
Since $\{f_n\}_{n=1}^{\infty}$ is a Riesz-Fischer sequence, then by Proposition $\bf A$ (Part b) it has a biorthogonal Bessel sequence,
and by uniqueness, this is  $\{g_n\}_{n=1}^{\infty}$.
Therefore $\{g_n\}_{n=1}^{\infty}$ is a Bessel sequence and a Riesz-Fischer sequence simultaneously, hence a Riesz sequence.
It then follows from Lemma $\ref{R}$ that $\{f_n\}_{n=1}^{\infty}$ and $\{g_n\}_{n=1}^{\infty}$ are Riesz bases for $H$.
The proof of Theorem $\ref{RF}$ is now complete.

\end{document}